\theoremstyle{plain}
\newtheorem{theorem}{Theorem}
\newtheorem{lemma}[theorem]{Lemma}
\theoremstyle{definition}
\newtheorem{remark}[theorem]{Remark}
\newcommand{\BB}{\mathbf{B}}
\newcommand{\es}{\sigma_{e}}
\newcommand{\lang}{\langle}
\newcommand{\rang}{\rangle}
\newcommand{\w}{\wedge}
\newcommand{\pa}{\partial}
\newcommand{\al}{\alpha}
\newcommand{\be}{\beta}
\newcommand{\om}{\omega}
\newcommand{\Om}{\Omega}
\newcommand{\vp}{\varphi}
\newcommand{\z}{\zeta}
\newcommand{\bC}{\mathbb{C}}
\newcommand{\bN}{\mathbb{N}}
\newcommand{\bR}{\mathbb{R}}
\newcommand{\bS}{\mathbb{S}}
\newcommand{\bZ}{\mathbb{Z}}
\newcommand{\cA}{\mathcal{A}}
\newcommand{\cH}{\mathcal{H}}
\newcommand{\cQ}{\mathcal{Q}}
\newcommand{\fb}{\mathfrak{b}}
\newcommand{\fj}{\mathfrak{j}}
\newcommand{\fn}{\mathfrak{n}}
\newcommand{\fs}{\mathfrak{s}}
\newcommand{\ra}{\rightarrow}
\newcommand{\sub}{\subseteq}
\begin{document}
	\title{Essential normality of Bergman modules over intersections of complex ellipsoids}

	\author{Mohammad Jabbari}
	\address{Mohammad Jabbari, Centro de Investigacion en Matematicas, A.P. 402, Guanajuato, Gto., C.P. 36000, Mexico}
	\email{mohammad.jabbari@cimat.mx}

	\maketitle
	\begin{abstract}
		This paper studies the essential normality of Bergman modules over the intersection of complex ellipsoids, as well as their quotients by monomial ideals.
	\end{abstract}
	
	
	\section{Introduction and statement of results}
	A commuting tuple $(T_1,\ldots,T_m)$ of operators, also called a multioperator, on a Hilbert space $\cH$ is called essentially normal if all of the commutators $[T_j,T^{\ast}_k]$, $j,k=1,\ldots,m$ are compact.
	Alternatively, essential normality can be attributed to the Hilbert $\bC[z_1,\ldots,z_m]$-module generated by $(T_1,\ldots,T_m)$, namely, $\cH$ equipped with the module action $P(z_1,\ldots,z_m)\cdot f$, $P\in \bC[z_1,\ldots,z_m]$, $f\in\cH$ given by $P(T_1,\ldots,T_m)f$.
	Brown, Douglas and Fillmore \cite{bdf1,bdf2,douglas-princeton} classified essentially normal multioperators up to unitary equivalence.
	The complete classifier here is the odd $K$-homology functor $K_1$ from the category of compact metrizable spaces to the category of abelian groups. 
	More precisely, for any compact subspace $X\sub\bC^m$, the abelian group $K_1(X)$ classifies essentially normal multioperators with essential Taylor spectrum $X$ up to unitary equivalence; the elements of $K_1(X)$ are equivalence classes of C*-monomorphisms from $C(X)$ to the algebra of bounded operators on $\cH$ modulo the ideal of compact operators, the so-called Calkin algebra.

	A rich source of essentially normal multioperators is given by the Arveson conjecture, which we now elaborate.
	Consider the Bergman space $L^2_a(\Om)$ of square-integrable analytic functions on a bounded strongly pseudoconvex domain $\Om\sub\bC^m$ with smooth boundary. 
	The multiplication by polynomials makes the Bergman space a Hilbert $\bC[z_1,\cdots,z_m]$-module. 
	Boutet the Monvel's theory of generalized Toeplitz operators shows that this Hilbert module is essentially normal \cite{boutetdemonvel-index,boutetdemonvel-guillemin}.
	Let $I\sub \bC[z_1,\ldots,z_m]$ be a homogeneous ideal of the ring of polynomials.
	The quotient Hilbert space $\cQ_{I}:=L^2_a(\Om)/\overline{I}$ has a natural Hilbert module structure given by $p\cdot(f+\overline{I})=pf+\overline{I}$, $p\in A$, $f\in L^2_a(\Om)$.
	Transporting this action to the orthogonal complement
	\[
	I^\perp=L^2_a(\Om)\ominus\overline{I}\cong\cQ_I
	\]
	makes $I^{\perp}$ a Hilbert module.
	Alternatively, the module structure of $I^\perp$ is given by the compression $T_p:=P_{I^{\perp}}M_p|_{I^{\perp}}$ of multiplication operators $M_p:L^2_a(\Om)\ra L^2_a(\Om)$, where $P_{I^{\perp}}$ is the orthogonal projection in $L^2_a(\Om)$ onto $I^\perp$.
	Arveson \cite{arveson-dirac,arveson-conjecture}, based on his work on the model theory of spherical contractions in multivariate dilation theory, conjectured: \textit{$I^{\perp}$ is essentially normal.} In other words, all commutators $[T_{z_j},T_{z_k}^{*}]$, $j,k=1,\ldots,m$ are compact.
	Arveson made his conjecture for the $m$-shift (or Drury-Arveson) space instead of the Bergman space; the use of Bergman spaces is due to Douglas \cite{douglas-index}.
	In the same paper, Douglas proposed the problem of the explicit computation of the element that $I^\perp$ represents in $K_1(X)$, where $X$, being the essential Taylor spectrum of $I^\perp$, can be canonically identified with the zero set $\{z\in\bC^m:p(z)=0,\forall p\in I\}$ of $I$ intersected with the unit sphere in $\bC^m\cong\bR^{2m}$ \cite[Theorem 5.1]{gw}.
	This is the so-called Douglas' index problem.
	A summary of results about this conjecture/problem is given in \cite[Chapter 41]{alpay}, \cite{gw-survey}.
	In particular, when $\Om$ is the unit ball and $I$ is monomial, Arveson's conjecture is proved in \cite{arveson-conjecture, douglas-monomial,djty}, and Douglas' index problem is answered in \cite{djty}.

	%
	%
	%
	%
	%
	%
	%
	%
	%
	%
	%
	%
	%
	%
	%
	%
	%
	%
	%
	%

	The essential normality of Bergman modules over domains other than strongly pseudoconvex ones has been heavily studied  in the literature. 
	Here are some of the results:
	\begin{itemize}[leftmargin=*]
		\item 
		The Bergman module over a non-pseudoconvex, complete Reinhardt domain is not essentially normal \cite{curto-salinas}.
		
		\item
		The Bergman module over any bounded, connected, planar domain is essentially normal \cite{axler-conway-mcdonald}.

		\item 
		The Bergman module over the polydiscs of dimension $>1$ is not essentially normal.
		More generally, the Bergman module over bounded symmetric domains of rank $>1$ is not essentially normal.
		This makes the spectral theory and index theory of Toeplitz operators more complicated on these domains \cite{upmeier-annals,upmeier}, \cite[Chapter 4]{zhu-FT-handbook}.

		\item 
		The essential normality of the Bergman module on a  bounded, pseudoconvex domain  is equivalent to the compactness of the $\overline{\pa}$-Neumann operator $N_1$ on $(0,1)$-forms with $L^2$ coefficients \cite{catlin-dangelo,fu-straube,salinas,salinas-sheu-upmeier}.
		Several sufficient conditions for this are given in the literature \cite{catlin-globalregularity,catlin-boundaryinvariants,henkin-iordan,mcneal-compactness}. 
		For example, strongly pseudoconvex domains, domains of finite type, and pseudoconvex domains with real analytic boundary have compact $N_{1}$.

		\item
		The Bergman module on a pseudoconvex, complete Reinhardt domain is essentially normal if and only if the boundary of the domain contains no one-dimensional holomorphic component \cite{salinas-sheu-upmeier}.
		
		\item
		The Bergman module over complex ellipsoids of the form
		\begin{equation}
			\left\{\sum\limits_{j=1}^{m}\left|z_j\right|^{2p_j}<1\right\}\sub\bC^m,\quad p_j>0\label{egg1}
		\end{equation}
		is essentially normal \cite{curto-salinas,curto-muhly}. 
		This has been generalized in \cite{jabbari-egg-index} to the domains of the form
		\begin{equation}
			\left\{\sum_{k=1}^K\left(\sum\limits_{j=1}^{J_k}\left|z_{jk}\right|^{2p_{jk}}\right)^{a_k}<1\right\}\sub\bC^{J_1+\cdots+J_K},\quad p_{jk},a_k>0,
			\label{egg2}
		\end{equation}
		as well as their inflations.
	\end{itemize}
	
	This article is about the essential normality of Bergman modules and their quotients over the intersection of complex ellipsoids of form (\ref{egg1}).
	Note that ellipsoids are pseudoconvex (because they are logarithmically convex, complete Reinhard domains \cite[Theorem 3.28]{range}), so are their finite intersections \cite[page 97]{range}.
	Some intersections of ellipsoids are not essentially normal such as the polydisks of dimension $>1$; however, 
	
	\begin{theorem}\label{theorem1}
		Suppose integers $J\geq 1$, $K\geq 1$, $L_1,\ldots,L_K\geq 0$.  
		The Bergman module over 
		\begin{equation}
			\Om:=\left\{\sum\limits_{j=1}^{J}\left|z_j\right|^{2p_j}+\sum\limits_{l=1}^{L_k}\left|w_{kl}\right|^{2q_{kl}}<1:k=1,\ldots,K\right\}\sub\bC^{J+L_1+\cdots+L_K},\quad p_j,q_{kl}>0\label{egg-intersection-1}
		\end{equation}
		is essentially normal.
	\end{theorem}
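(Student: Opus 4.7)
The plan is to exploit the complete Reinhardt structure of $\Omega$: being bounded, pseudoconvex, and torus-invariant, $L^2_a(\Omega)$ has an orthonormal basis of normalized monomials $e_{\alpha\beta}:=z^\alpha w^\beta/c_{\alpha\beta}$, where $c_{\alpha\beta}^2:=\|z^\alpha w^\beta\|_{L^2(\Omega)}^2$ and $\alpha\in\bN^J$, $\beta=(\beta_{kl})\in\prod_k\bN^{L_k}$. Each generator $T_{z_j}$ and $T_{w_{kl}}$ acts as a unilateral weighted shift with weight $\sqrt{c_{\alpha',\beta'}^2/c_{\alpha,\beta}^2}$, and every commutator $[T_x,T_y^*]$ is itself a (single- or two-step) weighted shift whose entries are algebraic combinations of these ratios. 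Essential normality thus reduces to the assertion that each commutator weight vanishes as $|\alpha|+|\beta|\to\infty$, which, for weighted shifts, is equivalent to compactness.

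First, I would compute $c_{\alpha\beta}^2$ in closed form. Passing to polar coordinates and substituting $u_j:=|z_j|^{2p_j}$, $v_{kl}:=|w_{kl}|^{2q_{kl}}$, the integral becomes a Dirichlet-type integral over the intersection-of-simplices polytope
\[
D' = \Bigl\{ u, v \geq 0 :\; \sum_j u_j + \sum_l v_{kl} \leq 1,\; k = 1, \ldots, K \Bigr\}.
\]
For each fixed $u$ with $U:=\sum_j u_j<1$, the tuples $v_{k\cdot}$ are constrained independently to the simplices $\{\sum_l v_{kl}<1-U\}$; applying the classical Dirichlet formula first in each $v_{k\cdot}$, and then, with the inherited factor $(1-U)^{B(\beta)}$, in $u$, yields
\[
c_{\alpha\beta}^2 \;\propto\; \frac{\bigl(\prod_j \Gamma(\tfrac{\alpha_j+1}{p_j})\bigr)\bigl(\prod_{k,l}\Gamma(\tfrac{\beta_{kl}+1}{q_{kl}})\bigr)\,\Gamma\!\bigl(B(\beta)+1\bigr)}{\bigl(\prod_k \Gamma\!\bigl(B_k(\beta)+1\bigr)\bigr)\,\Gamma\!\bigl(A(\alpha)+B(\beta)+1\bigr)},
\]
with the shorthand $A(\alpha):=\sum_j(\alpha_j+1)/p_j$, $B_k(\beta):=\sum_l(\beta_{kl}+1)/q_{kl}$, $B(\beta):=\sum_k B_k(\beta)$.

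Second, using $\Gamma(x+h)/\Gamma(x)=x^h(1+O(x^{-1}))$ as $x\to\infty$, I would compute the shift weights $c_{\alpha',\beta'}^2/c_{\alpha,\beta}^2$ and expand the resulting commutator entries. The point is that in every commutator---whether a self-commutator $[T_x,T_x^*]$ (a diagonal operator), an in-ellipsoid cross-commutator, or a cross-ellipsoid commutator $[T_{w_{kl}},T_{w_{k'l'}}^*]$ with $k\neq k'$---the leading Gamma asymptotics telescope, and what remains is uniformly of order $1/(A(\alpha)+B(\beta))$. Since $A(\alpha)+B(\beta)\to\infty$ precisely when $|\alpha|+|\beta|\to\infty$, each commutator is compact and essential normality follows.

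The main obstacle will be the cross-ellipsoid commutator $[T_{w_{kl}},T_{w_{k'l'}}^*]$ with $k\neq k'$: its shift transports mass between ellipsoids that share no $w$-variable and are coupled only through the common $z$-term. The cancellation of its leading Gamma ratios is supplied precisely by the denominator $\Gamma(A(\alpha)+B(\beta)+1)$ in the closed form. When $J\geq 1$, this denominator genuinely entangles every index and the cancellation carries through uniformly in all asymptotic regimes of $(\alpha,\beta)$, including those where some components stay bounded while others blow up; when $J=0$, the formula factorizes across $k$ (the domain becomes a product of ellipsoids) and the cross-$k$ commutator weight fails to vanish at infinity, consistent with the polydisk counterexample noted in the introduction. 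Carrying out this coupled cancellation cleanly is the heart of the argument.
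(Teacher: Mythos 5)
Your overall strategy — orthonormal monomial basis, closed-form norm via a Dirichlet-type integral, Gamma asymptotics — matches the paper's, and your closed form for $c_{\alpha\beta}^2$ is correct: rewriting the paper's Beta products in terms of Gamma functions gives exactly your expression up to a combinatorial constant. Your derivation via iterated Dirichlet integrals (integrate in each $v_{k\cdot}$ over a simplex of radius $1-U$, then in $u$) is arguably cleaner than the paper's passage to spherical coordinates.

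The key step you are missing is the Fuglede--Putnam reduction. Because the multiplication operators $M_{z_j}, M_{w_{kl}}$ all commute, once each of them individually is shown to be essentially normal (normal in the Calkin algebra), the Fuglede--Putnam theorem applied in the Calkin algebra forces every mixed commutator $[M_x, M_y^*]$ to be compact automatically. This is exactly how the paper proceeds, and it means the analysis reduces to the diagonal self-commutators $[M_{z_j},M_{z_j}^*]$ and $[M_{w_{kl}},M_{w_{kl}}^*]$ only. What you flag as "the heart of the argument" — the cross-ellipsoid commutator $[T_{w_{kl}},T_{w_{k'l'}}^*]$ with $k\neq k'$ — never needs to be computed; and indeed your proposal does not carry out that computation, only asserts that "the cancellation carries through uniformly." Without Fuglede--Putnam, you would genuinely need to verify this, and it is the most delicate of the commutators; with it, the issue disappears.

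Your discussion of the boundary case $J=0$ is also off. When $J=0$, the Bergman space factors as a tensor product over $k$, so $T_{w_{kl}}$ and $T_{w_{k'l'}}^*$ for $k\neq k'$ act on different tensor factors and their commutator is identically zero, not merely non-vanishing at infinity. What actually fails is the self-commutator $[T_{w_{kl}},T_{w_{kl}}^*]=[T_w,T_w^*]\otimes I$, a nonzero compact operator tensored with the identity on an infinite-dimensional space, hence non-compact. In other words, the hypothesis $J\geq 1$ is what guarantees the diagonal eigenvalues $\lambda',\lambda''$ of each self-commutator decay along \emph{all} directions to infinity (because the parameter $A(\alpha)=\sum_j(\alpha_j+1)/p_j$ is genuinely coupled to $\beta$ through $\Gamma(A+B+1)$), and that is the only place the coupling is needed. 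Finally, your claim that the commutator entries are "uniformly of order $1/(A+B)$" is not precise: in the regime where the shifted index stays bounded and the complementary parameter $A\to\infty$, the paper shows the decay is of order $A^{-1/p_1}$, which is slower than $1/A$ when $p_1>1$. This does not affect compactness, only the quantitative rate, but it matters if one later wants Schatten-class estimates.
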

	
	This theorem will be proved in Section \ref{section-theorem1}. 
	
	\begin{remark}
		The author guesses that the Bergman space over more general intersections of the form 
		\begin{equation}
			\left\{\sum\limits_{j=1}^{J}\left|z_j\right|^{2p_{jk}}+\sum\limits_{l=1}^{L_k}\left|w_{kl}\right|^{2q_{kl}}<1:k=1,\ldots,K\right\}\sub\bC^{J+L_1+\cdots+L_K},\quad p_{jk},q_{kl}>0\label{egg-intersection-2}
		\end{equation}
		is still essentially normal, but he has not yet been able to prove this.
	\end{remark}
	
	\begin{remark}
		There is a finer version of essential normality: A Hilbert $\bC[z_1,\ldots,z_m]$-module is $p$-essentially normal, $0<p<\infty$, if all of the commutators $[T_j,T^{\ast}_k]$, $j,k=1,\ldots,m$ are Schatten $p$-summable (namely, $|[T_j,T^{\ast}_k]|^p$ is trace class),
		where $T_j$ is the module action corresponding to the coordinate function $z_j$.
		The $p$-essential normality of the Bergman module over the unit ball, and more generally, over the ellipsoids of forms (\ref{egg1}) and (\ref{egg2})  is studied in \cite{arazy-fisher-janson-peetre,jabbari-psummable}.
		In these cases, the Bergman module is $p$-essentially normal exactly when $p$ is strictly larger than a certain number.
		This cut-off value reflects the boundary geometry of the domain.
		It is interesting to study the $p$-essential normality of the Bergman space over domains of the forms (\ref{egg-intersection-1}) or (\ref{egg-intersection-2}).
	\end{remark}

	
	Next, we study the essential normality of quotients of the Bergman module over domains of form (\ref{egg-intersection-1}) by monomial ideals.

	\begin{theorem}\label{theorem2}
		Let $\Omega$ be a domain of form (\ref{egg-intersection-1}), and let $I$ be a monomial ideal.
		
		(a)
		There exist a positive integer $k$, essentially normal Hilbert $\bC[z_1,\ldots,z_m]$-modules $\cA_0:=L^2_a(\Om)$, $\cA_1,\ldots,\cA_{k}$, and Hilbert $\bC[z_1,\ldots,z_m]$-module morphisms $\Psi_q:\cA_q\ra\cA_{q+1}$, $q=0,\ldots,k-1$ such that the sequence
		\begin{equation}
			0\ra \overline{I}\hookrightarrow \cA_0\stackrel{\Psi_0}{\ra} \cA_1\stackrel{\Psi_1}{\ra}\cdots \stackrel{\Psi_{k-1}}{\ra }\cA_k\ra 0\label{resolution}
		\end{equation}
		is exact.
		Here, $\overline{I}$ denotes the closure of $I$ in the subspace topology of the Hilbert space $L^2_a(\Om)$.
		
		(b)
		$I^\perp$ is essentially normal.
		
		(c)
		For each $q$, let $\es^q$ be the essential Taylor spectrum of the Hilbert module $\cA_q$, and let $\alpha_q$ be the C*-monomorphism from $C(\es^q)$ to the Calkin algebra of $\cA_q$ induced by essential normality.
		Then, in the group $K_1\left(\es^1\cup\cdots\cup\es^k\right)$, the equivalence class induced by the essential normality of $I^\perp$ is given by the formula $\sum_{q=1}^{k}(-1)^{q-1} [\al_q]$.
	\end{theorem}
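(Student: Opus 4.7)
The plan is to build a Čech-type resolution of $L^2_a(\Om)/\overline{I}$ using the irreducible decomposition of the monomial ideal $I$, and then derive essential normality in (b) and the $K$-homology formula in (c) via standard long-exact-sequence arguments. For Part (a), write $I=\bigcap_{\alpha=1}^N Q_\alpha$ as an intersection of irreducible monomial ideals (each of the form $Q_\alpha=(z_{i_1}^{a_1},\ldots,z_{i_r}^{a_r})$). For each non-empty $\Sigma\sub\{1,\ldots,N\}$ set $J_\Sigma:=\sum_{\alpha\in\Sigma}Q_\alpha$. Take $\cA_0:=L^2_a(\Om)$ and
\[
\cA_q:=\bigoplus_{|\Sigma|=q}L^2_a(\Om)/\overline{J_\Sigma},\qquad q=1,\ldots,N,
\]
with $\Psi_0$ the composition of the quotient $L^2_a(\Om)\twoheadrightarrow L^2_a(\Om)/\overline{I}$ with the diagonal embedding $L^2_a(\Om)/\overline{I}\hookrightarrow\bigoplus_\alpha L^2_a(\Om)/\overline{Q_\alpha}$ (injective because $I=\bigcap_\alpha Q_\alpha$), and $\Psi_q$ ($q\geq 1$) the alternating-sign Čech differentials induced by the natural quotient maps $L^2_a(\Om)/\overline{J_\Sigma}\to L^2_a(\Om)/\overline{J_{\Sigma'}}$ for $\Sigma\sub\Sigma'$. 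Exactness is the monomial Bergman analogue of the Čech/Mayer--Vietoris identity for the cover $V(I)=\bigcup_\alpha V(Q_\alpha)$, verified directly on the orthogonal monomial basis.

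Each summand $L^2_a(\Om)/\overline{J_\Sigma}$ further decomposes as a finite direct sum, indexed by bounded-exponent tuples, of Bergman-type modules on the coordinate slice $\Om\cap\{z_i=0:i\in\mathrm{supp}(J_\Sigma)\}$; this slice is again a domain of the form (\ref{egg-intersection-1}) in fewer variables, so by Theorem \ref{theorem1} its Bergman module is essentially normal. The actions of the bounded-exponent variables contribute only nilpotent shifts with finite-rank commutators, and hence each $\cA_q$ is essentially normal.

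For Part (b), the exact sequence $0\to I^\perp\to\cA_1\to\cA_2\to\cdots\to\cA_k\to 0$ extracted from (a) splits into short exact sequences $0\to\mathrm{im}(\Psi_{q-1})\to\cA_q\to\mathrm{im}(\Psi_q)\to 0$; iterating the standard lemma that a closed submodule $\cM$ of an essentially normal Hilbert module whose projection satisfies $[P_\cM,M_{z_k}]\in\cK$ for all $k$ is itself essentially normal yields essential normality of $I^\perp$ (the required compactness may alternatively be verified directly from the decay of Bergman-norm ratios $\|z^{\beta+e_k}\|/\|z^\beta\|$ along the staircase boundary of $I$, as in the principal-monomial case). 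For Part (c), these short exact sequences produce six-term exact sequences in $K$-homology; iterating the additivity of Brown--Douglas--Fillmore classes gives the Euler-characteristic identity $[\al_{I^\perp}]=\sum_{q=1}^{k}(-1)^{q-1}[\al_q]$ in $K_1(\es^1\cup\cdots\cup\es^k)$.

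The main technical obstacle is the combinatorial verification of exactness of the Čech complex in (a), together with the essential-normality analysis of the quotient modules $L^2_a(\Om)/\overline{J_\Sigma}$ on the coordinate slices; both amount to careful monomial-basis and Bergman-kernel estimates extending those underlying Theorem \ref{theorem1}.
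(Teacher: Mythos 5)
Your construction is essentially the same as the paper's: both build a \v{C}ech-style resolution of $\overline{I}\subseteq L^2_a(\Om)$ from a cover of the complement $\mathsf{C}(I)$ of the monomial exponents of $I$ by ``boxes,'' with alternating-sign differentials. Your choice of cover is the irreducible decomposition $I=\bigcap_\alpha Q_\alpha$; since an irreducible monomial ideal $Q_\alpha=(z_{i_1}^{a_1},\ldots,z_{i_r}^{a_r})$ has $\mathsf{C}(Q_\alpha)=\{n : n^{i_j}\leq a_j-1\}$, your quotient modules $L^2_a(\Om)/\overline{J_\Sigma}\cong J_\Sigma^\perp$ are exactly the paper's box modules $\mathcal{H}_{\fj_I}^{\fb_I}$. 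That choice is valid --- the paper explicitly allows \emph{any} finite box cover of $\mathsf{C}(I)$ --- so the resolution, the exactness, and the $K_1$ bookkeeping in (c) all go through unchanged.

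However, your argument for the essential normality of the resolution modules $\cA_q$ --- the key analytic input for (a) --- has a genuine gap. You claim that each box module decomposes into Bergman modules on the coordinate slice $\Om\cap\{z_i=0 : i\in\mathrm{supp}(J_\Sigma)\}$, that the slice is again of form (\ref{egg-intersection-1}), and that the bounded-exponent directions contribute only nilpotent shifts with finite-rank commutators. None of these is right. First, the form (\ref{egg-intersection-1}) requires $J\geq 1$ (a shared variable coupling all the defining inequalities); if $\mathrm{supp}(J_\Sigma)$ contains the coupling variables $z_j$, slicing them away yields a domain with $J'=0$ --- e.g.\ a polydisk --- whose Bergman module is \emph{not} essentially normal, so Theorem \ref{theorem1} does not apply. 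Second, the fibers of $\mathcal{H}_{\fj}^{\fb}$ over fixed bounded exponents carry the restriction of the full $L^2_a(\Om)$-norm, which is a \emph{weighted} Bergman norm on the remaining variables containing the Beta-factor $B\bigl(\abs{\tfrac{\al+1}{p}},\abs{\tfrac{\be+1}{q}}+1\bigr)$ that couples all the variables; it is precisely this coupling, not the slice geometry, that saves essential normality, and it is not the Bergman norm of the slice. Third, the truncated-shift commutators $[T_{\z_{j^i}},T_{\z_{j^i}}^*]$ on a box module are diagonal with infinitely many nonzero eigenvalues, so they are compact (in the good case) but never finite-rank; proving compactness amounts to verifying the norm-ratio condition $\omega(\ldots,b^l+1,\ldots)/\omega(\ldots,b^l,\ldots)\to 0$. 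The paper handles this directly: it observes that this decay is exactly what was established (for bounded $\al_1$, respectively $\be_{11}$, and the remaining indices $\to\infty$) during the proof of Theorem \ref{theorem1}. You should replace the slice argument with that norm-ratio verification.
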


	This theorem is proved in Section \ref{section-theorem2}.

	%
	%
	%

	\section{Proof of Theorem \ref{theorem1}}\label{section-theorem1}
	Since $\Om$ is a complete Reinhardt domain, polynomials are dense in $L^2_a(\Om)$ with respect to the topology of uniform convergence on compact subsets \cite[page 47]{range}.
	Then a standard shrinking argument (\cite[page 43]{zhu-FT}, \cite[page 11]{ds}) shows that the normalized monomials
	\[
	b_{\al,\be}
	:=\frac{z^{\al}w^{\be}}{\sqrt{\om(\al,\be)}}
	=\frac{z^{\al}\prod_{k=1}^Kw_k^{2\be_k}}{\sqrt{\om(\al,\be_1,\ldots,\be_K)}}
	=\frac{\prod_{j=1}^Jz_j^{\al_j}\prod_{k=1}^K\prod_{l=1}^{L_k}w_{kl}^{\be_{kl}}}{\sqrt{\om(\al,\be_1,\ldots,\be_K)}},
	\]
	with $\al$ ranging on $\bN^J$ and $\be=(\be_1,\ldots,\be_K)$ ranging on $\bN^{L_1+\cdots+L_K}$ constitute an orthonormal basis for the Hilbert space $L^2_a(\Om)$.

	\begin{lemma}
		The norm of monomials in the Bergman space $L^2_a(\Om)$ is given by 
		\begin{multline}
			\om(\al,\be)
			:=\left\|z^\al w^\be\right\|^2_{L^2_a(\Om)}=\\
			\frac{2^K\pi^{J+L_1+\cdots+L_K}}{\prod p_j\prod q_{kl}}
			\frac{1}{\prod_{k=1}^K\left|\frac{2\be_k+2}{q_k}\right|}
			B\left(\left|\frac{\al+1}{p}\right|,\left|\frac{\be+1}{q}\right|+1\right)B\left(\frac{\al+1}{p}\right)\prod_{k=1}^KB\left(\frac{\be_k+1}{q_k}\right),
			\label{norm}
		\end{multline}
		where
		\[
		\frac{\al+1}{p}
		=\left(\frac{\al_1+1}{p_1},\ldots,\frac{\al_J+1}{p_J}\right),
		\]
		\[
		\frac{\be_k+1}{q_k}
		=\left(\frac{\be_{k1}+1}{q_{k1}},\ldots,\frac{\be_{kL_k}+1}{q_{kL_k}}\right),
		\]
		\[
		\left|\frac{\al+1}{p}\right|
		=\sum_{j=1}^J\frac{\al_j+1}{p_j},
		\]
		\[
		\left|\frac{\be_k+1}{q_k}\right|
		=\sum_{l=1}^{L_k}\frac{\be_{kl}+1}{q_{k1}},
		\]
		\[
		\left|\frac{\be+1}{q}\right|
		=\sum_{k=1}^{K}\left|\frac{\be_k+1}{q_k}\right|,
		\]
		and
		\[
		B(x,y)
		=\int_{0}^1t^{x-1}(1-t)^{y-1}dt
		=\frac{\Gamma(x)\Gamma(y)}{\Gamma(x+y)},
		\]
		\[
		B\left(\frac{\al+1}{p}\right)=\frac{\prod_{j=1}^{J}\Gamma\left(\frac{\al_j+1}{p_j}\right)}{\Gamma\left(\sum_{j=1}^J\frac{\al_j+1}{p_j}\right)}
		\]
		are multi-variable Beta functions. 
		
	\end{lemma}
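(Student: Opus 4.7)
The plan is to evaluate $\|z^{\alpha}w^{\beta}\|^{2} = \int_{\Omega}|z^{\alpha}w^{\beta}|^{2}\,dV$ by a change of variables that reduces the computation to standard Dirichlet-type integrals. The structural feature I want to exploit is that the $K$ defining inequalities of $\Omega$ share the $z$-variables but split the $w$-variables into disjoint blocks $(w_{k1},\ldots,w_{kL_k})$: each such block appears in exactly one of the $K$ inequalities. This is what will allow the integral to factor across $k$.

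First I would pass to polar coordinates $z_{j} = r_{j}e^{i\theta_{j}}$, $w_{kl} = s_{kl}e^{i\phi_{kl}}$. Orthogonality of monomials in a Reinhardt domain makes the angular integration trivial, contributing $(2\pi)^{J+L_{1}+\cdots+L_{K}}$ and leaving an integral of $\prod_{j}r_{j}^{2\alpha_{j}+1}\prod_{k,l}s_{kl}^{2\beta_{kl}+1}$ over the region in the positive orthant cut out by the $K$ inequalities. Then I substitute $t_{j}=r_{j}^{2p_{j}}$ and $u_{kl}=s_{kl}^{2q_{kl}}$. Each substitution contributes a Jacobian factor $(2p_{j})^{-1}$ or $(2q_{kl})^{-1}$, the $2$'s combining with $(2\pi)^{J+L_{1}+\cdots+L_{K}}$ to give the overall prefactor
\[
\frac{\pi^{J+L_{1}+\cdots+L_{K}}}{\prod p_{j}\prod q_{kl}},
\]
while the monomial becomes $\prod_{j}t_{j}^{(\alpha_{j}+1)/p_{j}-1}\prod_{k,l}u_{kl}^{(\beta_{kl}+1)/q_{kl}-1}$ on the region $R=\{t_{j},u_{kl}>0:\sum_{j}t_{j}+\sum_{l}u_{kl}<1 \text{ for each }k\}$.

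Next, fix $\sigma:=\sum_{j}t_{j}<1$. Then the $K$ blocks of $u$-variables are constrained independently to simplices $\sum_{l}u_{kl}<1-\sigma$, and the standard Dirichlet formula gives, for each $k$,
\[
\int_{\sum_{l}u_{kl}<1-\sigma} \prod_{l}u_{kl}^{(\beta_{kl}+1)/q_{kl}-1}\,du_{kl} \;=\; (1-\sigma)^{\left|(\beta_{k}+1)/q_{k}\right|}\,\frac{B\!\left(\tfrac{\beta_{k}+1}{q_{k}}\right)}{\left|(\beta_{k}+1)/q_{k}\right|}.
\]
The residual $t$-integral then has integrand $\prod_{j}t_{j}^{(\alpha_{j}+1)/p_{j}-1}(1-\sigma)^{|(\beta+1)/q|}$ over the standard simplex, which by the weighted Dirichlet (Euler) formula evaluates to $B\!\left(\tfrac{\alpha+1}{p}\right)\cdot B\!\left(\left|\tfrac{\alpha+1}{p}\right|,\left|\tfrac{\beta+1}{q}\right|+1\right)$.

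Combining these factors produces the expression asserted, once one rewrites $\prod_{k}\left|(\beta_{k}+1)/q_{k}\right|^{-1} = 2^{K}/\prod_{k}\left|(2\beta_{k}+2)/q_{k}\right|$ to match the exact form stated in the lemma. The work is really all bookkeeping: tracking constants through the substitutions and recognizing that the $u$-integrals separate once $\sigma$ is held fixed. Once that separation is observed, the classical Dirichlet integral identities do the rest, and there is no genuine analytic obstacle.
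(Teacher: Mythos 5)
Your computation is correct and reaches formula~(\ref{norm}), but by a genuinely different intermediate decomposition than the paper's. After the angular integration (which you handle identically, via Reinhardt orthogonality, yielding the $(2\pi)^{J+L_1+\cdots+L_K}$ factor), the paper substitutes $X_j=x_j^{p_j}$ and $Y_{kl}=y_{kl}^{q_{kl}}$, turning the defining inequalities into $|X|^2+|Y_k|^2<1$, and then passes to spherical coordinates $X=r\xi$, $Y_k=s_k\eta_k$: the radial block $(r,s_1,\ldots,s_K)$ is evaluated by an iterated integral producing $B\bigl(\bigl|\tfrac{\al+1}{p}\bigr|,\bigl|\tfrac{\be+1}{q}\bigr|+1\bigr)$, and each angular block is evaluated by the hemisphere-surface-measure formula $\int_{\bS_+^{m-1}}x^{a}\,d\sigma_m(x)=2^{1-m}B\bigl(\tfrac{a+1}{2}\bigr)$, producing the remaining multivariable Beta factors. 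You instead substitute $t_j=r_j^{2p_j}$, $u_{kl}=s_{kl}^{2q_{kl}}$ so that the constraints become simplicial, condition on $\sigma=\sum_j t_j$, integrate each $u_k$-block over its simplex by the ordinary Dirichlet formula, and finish with the Liouville (weighted Dirichlet) formula over the $t$-simplex. Both routes are correct elementary bookkeeping; yours stays entirely with Lebesgue integrals over simplices and avoids introducing surface measures on spheres, while the paper's spherical-coordinate version more transparently separates radial from angular contributions and isolates the $2^K$ without the rewriting step you note at the end. The constants check out either way: your prefactor $\pi^{J+L_1+\cdots+L_K}/(\prod p_j\prod q_{kl})$ together with $\prod_k\bigl|\tfrac{\be_k+1}{q_k}\bigr|^{-1}$ is identical to the stated $2^K\pi^{J+L_1+\cdots+L_K}\big/\bigl(\prod p_j\prod q_{kl}\prod_k\bigl|\tfrac{2\be_k+2}{q_k}\bigr|\bigr)$.
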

	
	\begin{proof}
		(Compare \cite{beberok1,beberok2,beberok3}.)	
		Using polar coordinates $z_j=x_je^{\sqrt{-1}\theta_j}, w_{kl}=y_{kl}e^{\sqrt{-1}\vp_{kl}}$, we have
		\[
		\om(\al,\be)
		=(2\pi)^{J+L_1+\cdots+L_K}\int_{x\in\bR_+^J,~y_k\in\bR_+^{L_k},~x^{2p}+y_{k}^{2q_{k}}<1,~k=1,\ldots,K}x^{2\al+1}y^{2\be+1}dx\prod_{k=1}^K dy_k,
		\]
		where $dx=\prod_{j=1}^J dx_j$, $dy_k=\prod_{l=1}^{L_k} dy_{kl}$ are Lebesgue measures. 
		After the change of variables $X_j:=x_j^{p}$, $Y_{kl}:=y_{kl}^{q_{kl}}$, we have
		\[
		\om(\al,\be)
		=\frac{(2\pi)^{J+L_1+\cdots+L_K}}{\prod p_j\prod q_{kl}}\int_{X\in\bR_+^J,~Y_k\in\bR_+^{L_k},~X^{2}+Y_k^{2}<1,~k=1,\ldots,K}X^{\frac{2\al+2}{p}-1}Y^{\frac{2\be+2}{q}-1}dX\prod_{k=1}^K dY_k.
		\]
		Changing to the spherical coordinates $X=r\xi$, $Y_k=s_k\eta_k$, where $r$, $s_k$ are positive reals and $\xi$, $\eta_k$ live respectively on unit spheres $\bS^{J-1}\sub\bR^{J}$, $\bS^{L_k-1}\sub\bR^{L_k}$, we have
		\begin{multline*}
			\om(\al,\be)=
			\frac{(2\pi)^{J+L_1+\cdots+L_K}}{\prod p_j\prod q_{kl}}
			\int_{r,s_k\in\bR_+,~r^{2}+s^{2}_k<1,~k=1,\ldots,K}
			r^{\left|\frac{2\al+2}{p}\right|-1}\prod_{k=1}^Ks_k^{\left|\frac{2\be_k+2}{q_k}\right|-1}dr\prod_{k=1} ^Kds_k\\
			\times\int_{\xi\in\bS^{J-1}_{+},~\eta_k\in\bS^{L_k-1}_{+},~k=1,\ldots,K}
			\xi^{\frac{2\al+2}{p}-1}\eta^{\frac{2\be+2}{q}-1} d\sigma_J(\xi)\prod_{k=1}^Kd\sigma_k(\eta_k),
		\end{multline*}
		where $S^{J-1}_+:=\bS^{J-1}\cap\bR^{J}_+$, and $d\sigma_J$ is the Riemannian density that $dX$ induces on $\bS^{J-1}$, and similarly for others.
		The first integral is given by 
		\begin{multline*}
			\int_{0}^1\int_{0}^{\sqrt{1-r^2}}\cdots\int_{0}^{\sqrt{1-r^2}} r^{\left|\frac{2\al+2}{p}\right|-1}\prod_{k=1}^Ks_k^{\left|\frac{2\be_k+2}{q_k}\right|-1}dr\prod_{k=1} ^Kds_k=\\
			\frac{1}{\prod_{k=1}^K\left|\frac{2\be_k+2}{q_k}\right|}\int_{0}^1 r^{\left|\frac{2\al+2}{p}\right|-1}\left(1-r^2\right)^{\left|\frac{\be+1}{q}\right|}dr
			=\frac{1}{2\prod_{k=1}^K\left|\frac{2\be_k+2}{q_k}\right|}
			B\left(\left|\frac{\al+1}{p}\right|,\left|\frac{\be+1}{q}\right|+1\right).
		\end{multline*}
		The second integral can be computed using the following famous formula \cite[Section 1.8]{andrews}, \cite[page 13]{zhu-FT}:
		\[
		\int_{x\in\bS_+^{m-1}}x^\al d\sigma_m(x)=2^{1-m}B\left(\frac{\al+1}{2}\right).
		\]
		This proves the lemma.
	\end{proof}


	Next, we show that $L^2_a(\Om)$ is essentially normal. 
	Let $M_{z_j}, M_{w_{kl}}\in B(L^2_{a}(\Om))$ be the multiplication by the coordinate function $z_j, w_{kl}$.
	Since these operators commutate with each other, according to the Fuglede-Putnam theorem, it suffices to verify that each $M_{z_j}, M_{w_{kl}}$ is essentially normal.
	A straightforward computation shows that
	\[
	\left[M_{z_1},M_{z_1}^*\right]\left(b_{\al,\be}\right)
	=\lambda b_{\al,\be},\quad
	\forall \al\in\bN^J,~\forall\be=(\be_1,\ldots,\be_K)\in\bN^{L_1+\cdots+L_K},
	\]
	where $\lambda=\lambda'-\lambda''$ and
	\[
	\lambda'
	=\frac{\omega(\al,\be)}{\omega(\al_1-1,\al_2,\ldots,\al_J,\be)},\quad
	\lambda''
	=\frac{\omega(\al_1+1,\al_2,\ldots\al_J,\be)}{\omega(\al,\be)},
	\]
	and $\lambda'$ is set to be zero when $\al_1=0$.
	We need to check that $\lambda\ra 0$ when the norm of $(\al,\be)$ (say the $l^1$-norm $|\al|+|\be|$) tends to infinity.
	By formula (\ref{norm}), we have
	\[
	\lambda'
	=\begin{cases}
	\frac{\Gamma\left(\frac{\al_1+1}{p_1}\right)}{\Gamma\left(\frac{\al_1}{p_1}\right)}
	\frac{\Gamma\left(\frac{\al_1}{p_1}+A\right)}{\Gamma\left(\frac{\al_1+1}{p_1}+A\right)},&\ \mathrm{if}\ \al_1>0,\\
	0,&\ \mathrm{if}\ \al_1=0,\\
	\end{cases}
	\]
	where 
	\[
	A:=\sum_{j=2}^{J}\frac{\al_j+1}{p_j}+\left|\frac{\be+1}{q}\right|.
	\]
	Note that $\lambda''$ has the same expression as $\lambda'$ after replacing $\al_1$ by $\al_1+1$.
	According to Lemma \ref{fact-gamma}, when $\al_1$ is bounded and $A\ra\infty$, $\lambda'$ and $\lambda''$ are dominated by $A^{-1/p_1}$, so $\lambda\ra 0$. 
	Next, assume that $\al_1\ra\infty$.
	We write $\lambda=\lambda'(1-\lambda''/\lambda')$, where 
	\[
	\frac{\lambda''}{\lambda'}
	=\frac{\Gamma\left(\frac{\al_1+2}{p_1}\right)\Gamma\left(\frac{\al_1}{p_1}\right)}{\Gamma\left(\frac{\al_1}{p_1}\right)^2}
	\frac{\Gamma\left(\frac{\al_1+1}{p_1}+A\right)^2}{\Gamma\left(\frac{\al_1+1}{p_1}+A\right)\Gamma\left(\frac{\al_1}{p_1}+A\right)}.
	\]
	According to Lemma \ref{fact-gamma}, when $\al_1\ra\infty$, $\lambda'$ is bounded and $\lambda''/\lambda'\ra 1$. Therefore, $\lambda\ra 0$.
	
	It remains to verify that $M_{w_{11}}$ is also essentially normal.
	We have\[
	\left[M_{w_{11}},M_{w_{11}}^*\right]\left(b_{\al,\be}\right)
	=\mu b_{\al,\be},\quad
	\forall \al\in\bN^J,~\forall\be=(\be_1,\ldots,\be_K)\in\bN^{L_1+\cdots+L_K},
	\]
	where $\mu=\mu'-\mu''$ and 
	\[
	\mu'
	=\frac{\omega(\al,\be)}{\omega(\al,\be_{11}-1,\be_{12},\ldots,\be_{1L_1},\ldots,\be_{KL_K})},\quad
	\mu''
	=\frac{\omega(\al,\be_{11}+1,\be_{12},\ldots,\be_{1L_1},\ldots,\be_{KL_K})}{\omega(\al,\be)},
	\]
	and $\mu'$ is set to be zero when $\be_{11}=0$.
	We need to check that $\mu\ra 0$ when $|\al|+|\be|\ra\infty$.
	By formula (\ref{norm}), we have
	\[
	\mu'
	=\begin{cases}
	\frac{\Gamma\left(\frac{\be_{11}+1}{q_{11}}\right)}{\Gamma\left(\frac{\be_{11}}{q_{11}}\right)}
	\frac{\Gamma\left(\frac{\be_{11}}{q_{11}}+A\right)}{\Gamma\left(\frac{\be_{11}+1}{q_{11}}+A\right)}
	\frac{\Gamma\left(\frac{\be_{11}+1}{q_{11}}+A+B\right)}{\Gamma\left(\frac{\be_{11}}{q_{11}}+A+B\right)}
	\frac{\Gamma\left(\frac{\be_{11}}{q_{11}}+A+B+C\right)}{\Gamma\left(\frac{\be_{11}+1}{q_{11}}+A+B+C\right)},&\ \mathrm{if}\ \be_{11}>0,\\
	0,&\ \mathrm{if}\ \be_{11}=0,\\
	\end{cases}
	\]
	where 
	\[
	B:=1+\sum_{l=2}^{L_1}\frac{\be_{1l}+1}{q_{1l}},
	\]
	\[
	C:=\sum_{l=2}^{L_1}\frac{\be_{1l}+1}{q_{1l}}+\sum_{k=2}^K\left|\frac{\be_{k}+1}{q_k}\right|,
	\]
	\[
	D:=\sum_{l=2}^{L_1}\frac{\be_{1l}+1}{q_{1l}}+\sum_{k=2}^K\left|\frac{\be_{k}+1}{q_k}\right|+\left|\frac{\al+1}{p}\right|.
	\]
	Note that $\mu''$ has the same expression as $\mu'$ after replacing $\be_{11}$ by $\be_{11}+1$.
	According to Lemma \ref{fact-gamma}, when $\be_{11}$ is bounded and $B+C+D\ra\infty$, then $\mu'$ and $\mu''$ tend to zero, so $\mu\ra 0$. 
	Next, assume $\be_{11}\ra\infty$.
	We write $\mu=\mu'(1-\mu''/\mu')$, where 
	\[
	\frac{\mu''}{\mu'}
	=PQ,
	\]
	\[
	P
	=\frac{\Gamma\left(\frac{\be_{11}+2}{q_{11}}\right)\Gamma\left(\frac{\be_{11}}{q_{11}}\right)}{\Gamma\left(\frac{\be_{11}}{q_{11}}\right)^2}
	\frac{\Gamma\left(\frac{\be_{11}+1}{q_{11}}+B\right)^2}{\Gamma\left(\frac{\be_{11}+1}{q_{11}}+B\right)\Gamma\left(\frac{\be_{11}}{q_{11}}+B\right)},
	\]
	\[
	Q
	=\frac{\Gamma\left(\frac{\be_{11}+2}{q_{11}}+B+C\right)\Gamma\left(\frac{\be_{11}}{q_{11}}+B+C\right)}{\Gamma\left(\frac{\be_{11}}{q_{11}}+B+C\right)^2}
	\frac{\Gamma\left(\frac{\be_{11}+1}{q_{11}}+B+C+D\right)^2}{\Gamma\left(\frac{\be_{11}+1}{q_{11}}+B+C+D\right)\Gamma\left(\frac{\be_{11}}{q_{11}}+B+C+D\right)}.
	\]
	According to Lemma \ref{fact-gamma}, when $\be_{11}\ra\infty$, $\mu'$ is bounded and $\mu''/\mu'\ra 1$. Therefore, $\mu\ra 0$.
	This finishes the proof of Theorem \ref{theorem1}.

	%
	%
	
	We used the following fact in the proof of Theorem \ref{theorem1}.
	\begin{lemma}\label{fact-gamma}
		Given positive real numbers $a,b$, and real variable $x$, we have
		\[
		\frac{\Gamma(x+a)}{\Gamma(x+b)}x^{b-a}
		=1+\frac{(a-b)(a+b-1)}{2x}+\frac{(a-b)(a-b-1)\left(3(a+b-1)^2-a+b-1\right)}{24x^2}+O\left(x^{-3}\right),
		\]
		\[
		\frac{\Gamma(x+a)^2}{\Gamma(x)\Gamma(x+2a)}
		=
		1-\frac{a^2}{x}+\frac{a^2(a^2+2a-1)}{2x^2}+O\left(x^{-3}\right)
		\]
		as $x\ra\infty$.
	\end{lemma}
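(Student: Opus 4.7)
The plan is to derive both asymptotic expansions directly from Stirling's series
\[
\log\Gamma(z) = \Bigl(z-\tfrac{1}{2}\Bigr)\log z - z + \tfrac{1}{2}\log(2\pi) + \frac{1}{12z} + O(z^{-3})
\]
as $z\to\infty$, using elementary manipulation of logarithmic and geometric expansions.

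For the first identity, I would set $u(x):=\log\Gamma(x+a)-\log\Gamma(x+b)+(b-a)\log x$ and apply Stirling term by term at $z=x+a$ and $z=x+b$. Using the expansions $\log(x+c)=\log x + c x^{-1} - c^2(2x^2)^{-1} + c^3(3x^3)^{-1} + O(x^{-4})$ and $(x+c)^{-1}=x^{-1}-cx^{-2}+O(x^{-3})$, the $(z-\tfrac12)\log z$ piece contributes the $\log x$ terms that cancel against $(b-a)\log x$, together with a rational function of $1/x$; the $-z$ piece contributes only constants; and the $1/(12z)$ piece contributes to $1/x$ and $1/x^2$. Collecting, one finds
\[
u(x) = \frac{(a-b)(a+b-1)}{2x} + \frac{c_2(a,b)}{x^2} + O(x^{-3})
\]
for an explicit rational coefficient $c_2(a,b)$. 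Exponentiating via $e^{u}=1+u+u^2/2+O(u^3)$, the $u^2/2$ contribution combines with $c_2(a,b)$ to produce the second-order coefficient in the claim; I would then algebraically reorganize the resulting polynomial in $(a,b)$ into the factored form $(a-b)(a-b-1)(3(a+b-1)^2-a+b-1)/24$, checking correctness by specializing to $(a,b)=(1,0)$ and $(2,0)$, where the ratio is exactly $1$ and $1+1/x$ respectively.

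For the second identity, I would avoid redoing Stirling by observing the factorization
\[
\frac{\Gamma(x+a)^2}{\Gamma(x)\Gamma(x+2a)}
=\Bigl[\tfrac{\Gamma(x+a)}{\Gamma(x)}\,x^{-a}\Bigr]\Bigl[\tfrac{\Gamma(x+a)}{\Gamma(x+2a)}\,x^{a}\Bigr],
\]
applying the first formula with parameters $(a,0)$ to the first bracket and $(a,2a)$ to the second, and multiplying the two asymptotic series up to $O(x^{-2})$. The first-order coefficients, $a(a-1)/2$ and $-a(3a-1)/2$, sum to $-a^2$, recovering the $-a^2/x$ term; the second-order coefficient arises from the two bracket-wise $1/x^2$ contributions together with the product of the two $1/x$ terms, and a short simplification yields $a^2(a^2+2a-1)/2$.

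The main obstacle is purely computational: the $1/x^2$ coefficient in the first formula requires careful bookkeeping of (i) the Bernoulli contribution from $1/(12z)$, (ii) the quadratic remainder in the expansion of $(z-\tfrac12)\log(1+c/x)$, and (iii) the square $u(x)^2/2$ that appears upon exponentiation, all of which must combine into the stated factored form. Once this is done, the second formula follows by straightforward multiplication of two instances of the first, so no further delicate estimates are needed.
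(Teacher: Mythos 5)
Your proposal is correct and matches the paper's (very terse) proof: the paper cites Tricomi--Erd\'elyi and Andrews--Askey--Roy for the first expansion, whose derivation there is precisely the Stirling-series computation you outline, and then asserts the second formula follows immediately from the first, which is exactly the factorization $\frac{\Gamma(x+a)^2}{\Gamma(x)\Gamma(x+2a)}=\bigl[\frac{\Gamma(x+a)}{\Gamma(x)}x^{-a}\bigr]\bigl[\frac{\Gamma(x+a)}{\Gamma(x+2a)}x^{a}\bigr]$ you write down. Your coefficient bookkeeping checks out: $A_1+B_1=\frac{a(a-1)}{2}-\frac{a(3a-1)}{2}=-a^2$ and $A_2+B_2+A_1B_1=\frac{a^2(a^2+2a-1)}{2}$.
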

	\begin{proof}
		The first formula is proved in \cite{et}, \cite[Appendix C]{andrews}.
		The second formula follows immediately from the first one.
	\end{proof}

	\section{Proof of Theorem \ref{theorem2}}\label{section-theorem2}
	The theory developed in \cite{djty} for the case of the unit ball is applicable for domain $\Om$ (given in (\ref{egg-intersection-1})) and therefore gives resolution (\ref{resolution}).	
	Since the construction of this resolution is lengthy, we bring it in several steps.

	\textit{Step I: Some notations.}
	Let the complex variables $\{\z_1,\ldots,\z_m\}$ be an enumeration of $\{z_{j}:j=1,\ldots,J\}\cup\{w_{kl}:k=1,\ldots,K,~l=1,\ldots,L_k\}$.
	We will use the notation
	\begin{equation}
		\z^{\fn}:=\frac{\z_1^{n^1}\ldots\z_m^{n^m}}{\sqrt{\omega(\fn)}},\quad
		\fn=(n^1, \ldots, n^m)\in \bN^m \label{normalized}
	\end{equation}
	for the elements of the monomial orthonormal basis of $L^2_a(\Om)$.
	Given a positive integer $q$, let $S_q(m)$ denote the set of all $q$-shuffles of the set $\{1,\ldots, m\}$, namely
	\[
	S_{q}(m):=\left\{\fj:=(j^1, \ldots, j^q)\in\bZ^q : 1\leq j^1< j^2< \cdots < j^q\leq m\right\}. 
	\]
	Whenever necessary, we identify shuffles in $S_q(m)$ with subsets of $\{1,\ldots,m\}$ of size $q$. 
	This enables us to talk about the union, intersection, etc. of shuffles of $\{1,\ldots,m\}$ with themselves and with other subsets of $\{1,\ldots,m\}$.

	\textit{Step II: Boxes and their associated Hilbert modules.}\label{2-box}
	To each $\fj=(j^1, \ldots, j^q)\in S_q(m)$ and $\fb=(b^1, \ldots, b^q)\in \bN^q$, we associate the \textit{box}
	\[
	\BB^{\fb}_{\fj}:=\left\{(n^1,\ldots, n^m)\in \bN^m :  n^{j^i} \leq b^i\ \text{for} \ i=1,\ldots,q\right\},
	\]
	and to each box $\BB_{\fj}^{\fb}$, we associate the Hilbert space
	\[
	\mathcal{H}^{\fb}_{\fj}:=L^2_a(\Om)\ominus\overline{\left\lang \z^{b^1+1}_{j^1}, \ldots, \z^{b^q+1}_{j^q}\right\rang}
	\]
	consisting of all functions $X=\sum_{\fn\in\bN^m}  X_{\fn}\z^\fn\in L^2_a(\Om)$ such that $X_{\fn}=0$ for every $\fn\in\bN^m\setminus\mathbf{B}^{\fb}_{\fj}$.
	An element $X\in \mathcal{H}_{\fj}^{\fb}$ has the Taylor expansion $X=\sum X_{n^1\cdots n^m}\z^{\fn}$ with summation over $n^{j^1}\leq b^1, \ldots, n^{j^q}\leq b^q$.
	The general construction in the Introduction about the orthogonal complements of polynomial ideals makes $\cH_{\fj}^{\fb}$ a Hilbert $\bC[\z_1,\ldots,\z_m]$-module.
	More explicitly, the action of the coordinate function $\z_i$ on $\mathcal{H}_{\fj}^{\fb}$ is given by the operator $T^{\fj,\fb}_{\z_i}\in B\left(\cH_{\fj}^{\fb}\right)$ defined by
	\[
	T^{\fj,\fb}_{\z_i}(\z^{\fn}):=\left\{\begin{array}{ll}\z_i z^{\fn},&\text{if}\ (n^1,\ldots,n^{i-1}, n^i+1,n^{i+1}, \ldots, n^m)\in \BB^{\fb}_{\fj},\\ 0,&
	\text{otherwise}.
	\end{array}\right.
	\] 
	
	A fundamental fact is that each $\mathcal{H}_{\fj}^{ \fb}$ is essentially normal.
	To prove this, according to the argument given in \cite[Proposition 2.3]{djty}, it suffices to verify that the ratio
	\[
	\frac{\omega(n^1,\ldots,n^{l-1},b^l+1,n^{l+1},\ldots ,n^m)}{\omega(n^1,\ldots,n^{l-1},b^l,n^{l+1},\ldots n^m)},
	\]
	with $l$ and $b_l$ fixed, approaches zero when the norm of $(n^1,\ldots,n^{l-1},b^l,n^{l+1},\ldots, n^m)$ tends to infinity. 
	This was verified during the proof of the essential normality of $L^2_a(\Om)$ in Theorem \ref{theorem1}.

	\textit{Step III: The construction of modules $\cA_q$ in resolution (\ref{resolution}).}\label{2-resolution}
	Let the ideal $I\sub\bC[\z_1,\ldots,\z_m]$ be generated by distinct monomials
	\[
	\z^{\alpha_i},\quad
	\alpha_i:=(\al_i^1,\ldots,\al_i^{m})\in\bN^m,\quad
	i=1,\ldots, l.
	\]
	Let $\mathsf{C}(I)\sub\bN^m$ be the set of the exponents of those monomials which do not belong to $I$.
	Note that the set of monomials belonging to $I$ is a basis of $I$ as a complex vector space \cite[Theorem 1.1.2]{hh}. 
	Also note that a monomial $u$ belongs to $I$ if and only if there is a monomial $v$ such that $u=vz^{\alpha_i}$ for some $i=1, \ldots, l$ \cite[Proposition 1.1.5]{hh}.
	In other words, $\z_1^{n^1}\cdots \z_m^{n^m}\in\mathsf{C}(I)$ if and only if for every $i=1,\ldots,l$ there exists $s_i\in\{1,\ldots,m\}$ such that $n^{s_i}< \alpha_i^{s_i}$. 
	Consider the finite collection 
	\[
	S(\alpha_1, \ldots, \alpha_l):=\{1,\ldots,m\}^l
	\]
	of  $l$-tuples $\mathfrak{s}=(s_1, \ldots, s_l)$ of integers such that $1\leq s_i\leq m$ for every $i$. 
	Given $\mathfrak{s}$, let $\fj_{\mathfrak{s}}$ be the shuffle associated to the set $\{s_1,\ldots,s_l\}$.
	For each $j\in \fj_{\mathfrak{s}}$, let $b_j$ be the minimum of all $\alpha_i^{s_i}-1$, $i=1,\ldots,l$, such that $s_i=j$. 
	Set $\fb_{\fs}:=(b_j)_{j\in\fj_{\fs}}$. 
	The following symbolic logic computation shows that: \textit{$\mathsf{C}(I)$ is the union of boxes $\BB_{\fj_{\fs}}^{\fb_{\fs}}$, $\mathfrak{s}\in S(\alpha_1, \ldots, \alpha_l)$}.
	\begin{align*}
		\z_1^{n^1}\cdots \z_m^{n^m}\in\mathsf{C}(I)
		&\leftrightarrow
		\Big(n^1<\al_1^1\vee\cdots\vee n^m<\al_1^m\Big)\w\cdots
		\w\Big(n^1<\al_l^1\vee\cdots\vee n^m<\al_l^m\Big)\\
		&\leftrightarrow
		\bigvee_{(s_1,...,s_l)\in\{1,\ldots,m\}^l}\Big(
		n^{s_1}<\al_1^{s_1}\w\cdots\w
		n^{s_l}<\al_l^{s_l}\Big).
	\end{align*}

	From now on, fix a finite collection of boxes
	\begin{equation}
		\BB_{\fj_i}^{\fb_i},\quad
		i=1,\ldots,k\label{some}
	\end{equation}
	such that their union equals $\mathsf{C}(I)$.
	Given $I\sub\{ 1, \ldots, k\}$ (note that we are using the symbol $I$ for two purposes), let 
	\[
	\BB_{\fj_I}^{\fb_I}:=\bigcap_{i\in I} \BB_{\fj_i}^{\fb_i}
	\]
	denote the intersection of boxes $\BB_{\fj_i}^{\fb_i}$, $i\in I$.
	(Note that the intersections of boxes are again boxes.)
	Each box $\BB_{\fj_I}^{\fb_I}$ has a corresponding Hilbert module $\mathcal{H}_{\fj_I}^{\fb_I}$ as introduced in Step II. For each $q=1,\ldots,k$, set

	\[
	\cA_q:=\bigoplus\limits_{I\in S_q(k)} \mathcal{H}_{\fj_I}^{\fb_I},\quad
	\cA_0:=L^2_a(\Om).
	\]
	Note that each Hilbert space $\cA_q$ has a Hilbert $\bC[\z_1,\ldots,\z_m]$-module structure coming from the $\bC[\z_1,\ldots,\z_m]$-module structures on its direct summands. 
	Since Hilbert modules associated to boxes are essentially normal, it follows that each $\cA_q$ is also essentially normal \cite[Theorem 2.2]{douglas-index}.
	
	\textit{Step IV: The construction of maps $\Psi_q$ in resolution (\ref{resolution}).}\label{subsec:morphism}
	Thinking of the elements of $S_{q+1}(k)$ as the subsets $I_{q+1}\sub\{1,\ldots, k\}$ of size $q+1$, define the maps $f^i_{q+1}: S_{q+1}(k)\to S_{q}(k)$, $i=1, \ldots, q+1$ by setting $f^i_{q+1}(I_{q+1})$ to be the subset of $\{1, \ldots, k\}$ obtained by dropping the $i$-th smallest element in $I_{q+1}$. 
	The map $\Psi_q:\cA_q\ra\cA_{q+1}$ is defined by sending $X=\sum_{I_q\in S_q(k)} X^{I_q}\in\cA_q$, $X^{I_q}\in  \mathcal{H}_{\fj_{I_q}}^{\fb_{I_q}}$ to $Y=\sum_{I_{q+1}\in S_{q+1}(k)} Y^{I_{q+1}}\in\cA_{q+1}$, $Y^{I_{q+1}}\in \mathcal{H}_{\fj_{I_{q+1}}}^{\fb_{I_{q+1}}}$, given by  
	\[
	\left(Y^{I_{q+1}}\right)_{\fn}=
	\begin{cases}
	\sum_{i=1}^{q+1}(-1)^{i-1} \left(X^{f^i_{q+1}(I_{q+1})}\right)_{\fn},&\fn\in \BB_{\fj_{I_{q+1}}}^{\fb_{I_{q+1}}},\\
	0,&\text{otherwise}.
	\end{cases}
	\]

	The arguments in \cite{djty} prove that the sequence (\ref{resolution}) just constructed is an exact sequence of Hilbert $\bC[\z_1,\ldots,\z_m]$-modules and bounded module maps between them.
	This completes the proof of Theorem \ref{theorem2}.(a).
	
	Set 
	\[
	\cA_q^{-}:=\mathrm{Im}(\Psi_{q-1})=\mathrm{Ker}(\Psi_q),\quad
	q=1,\ldots,k-1.
	\]
	The long exact sequence (\ref{resolution}) can be decomposed into short exact sequences
	\begin{multline}
		0\ra\overline{I}\hookrightarrow\cA_0\xrightarrow{\Psi_0} \cA_{1}^{-}\ra 0,\quad
		0\ra \cA_{q}^-\hookrightarrow \cA_q\xrightarrow{\Psi_q} \cA_{q+1}^{-}\ra 0,\quad
		q=1, \ldots, k-2,\\
		0\ra \cA_{k-1}^-\hookrightarrow \cA_{k-1}\xrightarrow{\Psi_{k-1}} \cA_{k}\ra 0.\label{SESS}
	\end{multline}
	
	A fundamental fact proved by Arveson \cite[Theorem 4.3]{arveson-conjecture} as well as Douglas \cite[Theorem 2.1]{douglas-monomial}, \cite[Theorem 2.2]{douglas-index} says that in a short exact sequence of Hilbert modules, the essential normality of the middle and either of the other two modules implies the essential normality of the other.
	Applying this fact repeatedly on our short exact sequences in (\ref{SESS}) implies that $\overline{I}$ is essentially normal. 
	Applying the fact one more time to the short exact sequence
	\[
	0\ra \overline{I}\ra L^2_a(\Om)\ra  L^2_a(\Om)/\overline{I}\ra 0
	\]
	implies that $L^2_a(\Om)/\overline{I}\cong I^\perp$ is essentially normal.
	This finishes the proof of Theorem \ref{theorem2}.(b).

	Let $\alpha_q$ (respectively, $\alpha_q^-$) be the C*-monomorphism from $C(\es^q)$ to the Calkin algebra of $\cA_q$ (respectively, from $C(\es^{q-})$ to the Calkin algebra of $\cA_q^-$) induced by essential normality.
	Corollary 3.9 in \cite{djty} applied to the last two short exact sequences in (\ref{SESS}) gives the canonical identifications
	\[
	[\alpha_{k-1}]=[\alpha_{k-1}^-]+[\alpha_k]\in K_1\left(\es^{k-1}\right),\quad
	[\alpha_{k-2}]=[\alpha_{k-2}^-]+[\alpha_{k-1}^-]\in K_1\left(\es^{k-2}\right).
	\]
	Pushing forward these equations into $K_1\left(\es^{k-1}\cup \es^{k-2}\right)$ by inclusion maps $\es^{k-1}, \es^{k-2}\hookrightarrow \es^{k-1}\cup \es^{k-2}$ gives
	\[
	[\alpha_{k-2}^-]=[\alpha_{k-2}]-[\alpha_{k-1}]+[\alpha_k]\in K_1\left(\es^{k-1}\cup \es^{k-2}\right).
	\]
	Continuing this argument, we have
	\begin{equation}
		[\alpha_{1}^-]=[\alpha_1]-[\alpha_2]+\cdots+(-1)^{k-1}[\alpha_{k}]\in K_1(\es^1\cup \cdots \cup \es^k).\label{hope}
	\end{equation} 
	On the other hand, the short exact sequence
	\[
	0\ra \overline{I}\ra L^2_a(\Om)\ra \cA_1^-\ra 0
	\]
	establishes a natural Hilbert module isomorphism between $\cA_1^-$ and $L^2_a(\Om)/\overline{I}\cong I^{\perp}$, hence the equivalence class represented by $I^\perp$ equals $[\al_1^{-}]$ according to \cite[Proposition 4.4]{dty}. 
	This, together with (\ref{hope}), gives the index formula in Theorem \ref{theorem2}.(c).
	This completes the proof of Theorem \ref{theorem2}.

\end{document}